\newcommand{\commentout}[1]{}
\newcommand{\RR}{\mathbb{R}}
\newcommand {\e}  {\epsilon}
\newcommand {\lb} {\lambda}
\newcommand {\Chi} {{\bf \raise 2pt \hbox{$\chi$}} }
\newcommand{\ud}{\,\mathrm{d}}
\newcommand{\bd}[1]{\boldsymbol{#1}}
\newcommand {\f}   {\frac}
\newcommand {\p}   {\partial}
\newcommand{\barint}{\kern3pt \raise3.4pt\hbox{\vrule height.6pt
    width7pt} \kern-10pt \int}
\newcommand{\dis}{\displaystyle}
\newcommand{\beq}{\begin{equation}}
\newcommand{\eeq}{\end{equation}}
\newcommand{\bea} {\begin{array}{rl}}
\newcommand{\eea} {\end{array}}
\newcommand{\bepa}{\left\{ \begin{array}{l}}
\newcommand{\eepa} {\end{array}\right.}
\newtheorem{theorem}{Theorem}[section]
\newtheorem{lemma}[theorem]{Lemma}
\newcommand{\qed}{{ \hfill
                       {\unskip\kern 6pt\penalty 500 \raise -2pt\hbox{\vrule\vbox to 6pt{\hrule width 6pt
                       \vfill\hrule}\vrule} \par}   }}
\title{\Large \bf Derivation of the bacterial run-and-tumble kinetic equation from a model with biochemical pathway}
\author{
Beno\^ \i t Perthame\thanks{Sorbonne Universit\'es, UPMC Univ Paris 06, UMR 7598, Laboratoire Jacques-Louis Lions, F-75005, Paris, France}
\thanks{CNRS, UMR 7598, Laboratoire Jacques-Louis Lions, F-75005, Paris, France}
\thanks{INRIA-Paris-Rocquencourt, EPC MAMBA, Domaine de Voluceau, BP105, 78153 Le Chesnay Cedex, France}
\and
Min Tang\thanks{Institute of natural sciences and department of mathematics,
Shanghai Jiao Tong University, Shanghai, 200240, China.
This author is partially supported by NSF of Shanghai under grant 12ZR1445400, NSFC 11301336
and 91330203, and Shanghai Pujiang Program 13PJ140700.}
\and
Nicolas Vauchelet\footnotemark[1] \footnotemark[2] \footnotemark[3]
}
\date{\today}
\begin{document}
\maketitle
\pagestyle{plain}
\pagenumbering{arabic}

\begin{abstract}
Kinetic-transport equations are, by now, standard models to describe the dynamics of populations of bacteria moving by run-and-tumble. Experimental observations show  that bacteria increase their run duration when encountering an increasing gradient of chemotactic molecules. This led to a first class of models which heuristically include tumbling frequencies depending on the path-wise gradient of chemotactic signal.

More recently,  the biochemical pathways regulating the flagellar motors were uncovered. This knowledge gave rise to a second class of kinetic-transport equations, that takes into account an intra-cellular molecular content  and which relates the tumbling frequency to this information. It turns out that the tumbling frequency depends on the chemotactic signal, and not on its gradient.

For these two classes of models, macroscopic equations of Keller-Segel type, have  been derived using diffusion or hyperbolic rescaling. We complete this program by showing how the first class of equations can be derived from the second class with molecular content after appropriate rescaling. The main difficulty is to explain why the path-wise gradient of chemotactic signal can arise in this asymptotic process.

Randomness of receptor methylation events can be included, and our approach can be used to compute the tumbling frequency in presence of such a noise.

\end{abstract}

\bigskip

\noindent {\bf Key words:}  kinetic-transport equations;  chemotaxis; asymptotic analysis; run and tumble; biochemical pathway;
\\[3mm]
\noindent {\bf Mathematics Subject Classification (2010):} 35B25; 82C40; 92C17

\section{Introduction}

Two classes of kinetic-transport equations have been proposed to describe, at the cell scale, the movement of bacteria by `run and tumble' in a given external effective  signal $M(x,t)$, usually related to the extra-cellular chemo-attractant concentration $S$ by a relation of the type $M = m_0+\ln(S)$.

The simplest class is for the probability $\bar p(\bd{x},\bd{v},t)$ to find a bacteria at location $\bd{x} \in \RR^d$ and with velocity $\bd{v}\in V$ (a smooth bounded subset of $\RR^d$, one can choose the unit ball to fix idea ). The evolution of this probability is given by a Boltzmann type equation
\beq\label{eq:kinetic}
{\p_t} \bar	p + \bd{v}\cdot\nabla_{\bd{x}} \bar p = {\mathcal T}[D_t M] ( \bar p), \qquad \bd{x} \in \RR^d, \; \bd{v} \in V, \; t \geq 0,
\eeq
with the path-wise gradient of $M$ defined as
\beq\label{eq:DM}
D_t M= {\p_t}M	+\bd{v}\cdot\nabla_{\bd{x}}M
\eeq
and $T$ a tumbling kernel which typically takes the form
\beq\label{eq:tumbling1}
 {\mathcal T}[D_t M] (\bar p) = \int_V\left[  T \big(D_t M(\bd{x},\bd{v}',t), \bd{v},\bd{v}' \big) \bar p(\bd{x},\bd{v}',t)
  - T \big(D_t M(\bd{x},\bd{v},t),\bd{v}',\bd{v}\big) \bar p(\bd{x},\bd{v},t) \right] \ud\bd{v}' .
\eeq
Such equations, with $ {\mathcal T}$ depending on $M$ or $D_t M$,  were used intensively to model bacterial chemotaxis, possibly  with $M$ connected to the cell density, as a result of chemoattractant release by bacteria.  They were first introduced in \cite{ODA} and the Keller-Segel  drift-diffusion system was subsequently derived \cite{OH} in the diffusion limit; surprisingly, with a kernel $T$ depending on $M$ and not on its gradient, and in opposition to the Keller-Segel system which solutions blow-up for large mass, it was proved that the solutions exist globally \cite{CMPS,HKS}. However, experiments show that bacteria as {\em E.coli} extend their runs when feeling an increasing concentration of  chemoattractant and  this led to study tumbling kernels $T$ that depend on $D_tM$, see \cite{ErbanOthmer04, DS}. The nonlinear theory is then more difficult  (see \cite{bcgp} and the references therein) and blow-up can occur in finite time \cite{BC}.  These models with $T$ depending on $D_t M$ are able to explain the experimental observation of traveling pulses of bacteria, which cannot be done when $T$ only depends on $M$ itself, see~\cite{SC1, SC2}. Also, departing from this kinetic-transport equation, it is possible to rescale it and study the diffusion and hyperbolic limit as in~\cite{ErbanOthmer04, DS, SC1, SC2, HKS, HP}. When $T$ undergoes  stiff dependency on $D_t M$,  the hyperbolic limit is singular and the analysis is particularly delicate~\cite{JV1}.
\\

More elaborated  kinetic models have been proposed recently that  incorporate intracellular chemo-sensory system. In the simplest description of the biochemical pathways, they use a single additional variable $m \geq 0$, which represents the intracellular methylation level. Then,  the kinetic-transport equation is written for the probability density function $p(\bd{x},\bd{v},m,t)$ of
bacteria at time $t$, position $\bd{x}\in\RR^d$, moving at velocity $\bd{v}\in V$
and methylation level $m >0 $
\beq\label{eq:kineticm} \begin{cases}
{\p_t}p+\bd{v}\cdot\nabla_{\bd{x}} p+\p_m [f(m,M)p] = {\mathcal Q}[m,M](p),
\\[5pt]
p(\bd{x},\bd{v},m=0,t)=0.
\end{cases}
\eeq
The intracellular adaptation dynamics is described by the reaction rate $f(\cdot)$ for which we assume $f(m=0,M)>0$, which allows us to pose the boundary condition at $m=0$.
The tumbling term ${\mathcal Q}[m,M](p)$ is
\begin{equation}\label{eq:Q}
{\mathcal Q}[m,M](p)=\int_V \left[
\lambda (m,M,\bd{v},\bd{v}')p(t,\bd{x},\bd{v}',m)  -
 \lambda (m,M,\bd{v}',\bd{v}) p(t,\bd{x},\bd{v},m) \right] \ud\bd{v}',
\end{equation}
where $\lambda(m,M,\bd{v},\bd{v}')$ denotes the methylation dependent tumbling frequency from
$\bd{v}'$ to $\bd{v}$, in other words the response of the cell depending on its environment and internal state. We borrow this formalism from \cite{JOT, STY} even though this type of models, involving more general signal transduction, can be traced back to \cite{DS, ErbanOthmer04, ErbanOthmer07, XO}. The authors in \cite{DS,ErbanOthmer04, ErbanOthmer07, STY,XO, X} developed the asymptotic theory which allows to recover, in the diffusion and in the hyperbolic limits, macroscopic equations where the variables are only $(\bd{x},t)$ as the Keller-Segel system, or $(\bd{x},m,t)$ for structured Keller-Segel models.
\\

In the program of establishing the relations between these pieces of the model hierarchy for bacterial population motion, a derivation is missing: how are related these two classes of kinetic models~\eqref{eq:kinetic}--\eqref{eq:tumbling1} and \eqref{eq:kineticm}--\eqref{eq:Q}?
\\

Our goal is to show how, assuming  fast adaptation and stiff response, the methylation level is at equilibrium with the external signal represented by $M$, and the equation~\eqref{eq:kinetic} can be derived from~\eqref{eq:kineticm}.  In particular we aim at computing the bulk tumbling kernel $T(D_t M, \bd{v}, \bd{v'})$ from the methylation dependent kernel $\lambda (m,M,\bd{v},\bd{v'})$, a statement we give in the next section. Two difficulties arise here: one is to infer the proper rescaling in the kinetic equations, the second is to carry-out the mathematical analysis for singular limits. Our approach allows us to also  include noise resulting from random receptor-methylation and demethylation events. The proof of the formula for $T$ is given in sections~\ref{sec:change} and \ref{sec:apb}; we show that a direct use of the variable $m$ is not enough to produce the formula and that a new variable is needed, which zooms on the  intra- and extra-cellular methylation equilibrium. We conclude by relating our notations to a more physically based description of the same model where the cell receptors activity is used in the model parameters, see section~\ref{sec:physics}.

To keep simplicity, we assume that the external signal function $M(\bd{x},t)$ is given and smooth. Therefore questions of existence and blow-up are not considered here.

\section{Fast adaptation, stiff response}

\paragraph{Assumptions.}
For our mathematical derivation, we introduce a small parameter $\e$ which acts both as a fast time scale for external signal transduction and as a stiffness parameter for the response in terms of  tumbling rate.
We assume moreover that the reaction rate $f$ only depends on the difference $m-M$, in accordance with the physical models that we recall in Section~\ref{sec:physics}.
Therefore, we rescale equation~\eqref{eq:kineticm}--\eqref{eq:Q} as

\beq\label{eq:kineticscale}
\begin{cases}
{\p_t}p_\epsilon+\bd{v}\cdot\nabla_{\bd{x}} p_\epsilon+\f{1}{\epsilon}\p_m\Big(f\big(m-M\big)p_\epsilon\Big)= {\mathcal Q_\e}[m,M](p_\e),
\\[5pt]
p_\e (\bd{x},\bd{v},m=0,t)=0,
\end{cases}
\eeq
with the tumbling kernel
\beq\label{eq:kineticscaleQ}
 {\mathcal Q_\e}[m,M](p_\e) = \int_V \left[
\Lambda \Big(\f{m-M}{\epsilon}, \bd{v},\bd{v}' \Big)p_\epsilon(\bd{x},\bd{v}',m,t) - \Lambda \Big(\f{m-M}{\epsilon}, \bd{v}',\bd{v}\Big)p_\epsilon(\bd{x},\bd{v},m,t) \right] \ud\bd{v}' .
\eeq
We complete this equation with an initial data  $p^{\rm ini} \geq 0$ which satisfies
\beq \label{as:pinitmoment}
\dis \iiint_{\RR^d\times V\times \RR} (1+m^2) p^{\rm ini}(\bd{x},\bd{v},m) \ud \bd{x} \ud \bd{v} \ud m<\infty ,
\eeq
\beq \label{as:barpinit}
\bar{p}^{\rm ini} :=\int_{\mathbb{R}}p^{\rm ini} \ud m  \in L^\infty( \RR^d\times V).
\eeq

Also, we are going to use several assumptions for the functions $M$, $f$ and $\Lambda$. We assume they are as smooth as necessary and
that for some constants $m_\pm$, $g_\pm$, $\lambda_\pm$,
\beq\label{eq:M}
0 < m_- \leq M(\bd{x},t) \leq m_+, \qquad M\in C_{\rm b}^1 \big( \RR^d \times [0, \infty) \big),
\eeq
\beq\label{eq:F}
f(y)=-yG(y),\qquad \mbox{with } G\in C_{\rm b}^1( \RR), \qquad   0<g_-\leq G(y) \leq g_+,
\eeq
\beq\label{eq:lambda}
\p_y \Lambda \Big(y, \bd{v},\bd{v}' \Big) <0, \qquad  0 < \lambda_- \leq   \Lambda \Big(y, \bd{v},\bd{v}' \Big)  \leq \lambda_+.
\eeq

Various scalings have been proposed for kinetic equation and the closest, but still different seems to be  the high field limit \cite{BeMaPo}. In \cite{Liao_Jie}, still other scalings or limits are studied.

\paragraph{The main result.}

With these assumptions, we are going to show that as $\epsilon$ vanishes, we recover the simpler model~\eqref{eq:kinetic}--\eqref{eq:tumbling1} as a limit of~\eqref{eq:kineticscale}.
\begin{theorem} [Derivation of the kinetic equation]\label{theor1}
We make the assumptions \eqref{as:pinitmoment}--\eqref{as:barpinit} on the initial data, and \eqref{eq:M}--\eqref{eq:lambda} on the coefficients. Let $p_\epsilon$ be the solution to~\eqref{eq:kineticscale}.
Then, for all $T>0$,  $\bar{p}_\epsilon$ is bounded in $L^\infty \big([0,T]\times \RR^d\times V\big)$ and
$$
\bar{p}_\epsilon :=\int_{\mathbb{R}}p_\epsilon\ud m    \underset{ \e \to 0 }{\rightharpoonup} \bar{p}_0 \quad \mbox{in } L^\infty \big([0,T]\times \RR^d\times V\big)\mbox{-weak-}\star
$$
and $\bar p_0$ satisfies equation \eqref{eq:kinetic}--\eqref{eq:tumbling1} with
$$
T\big(u, \bd{v},\bd{v}' \big)=\Lambda\Big(-\f{u}{G(0)},\bd{v},\bd{v}' \Big).
$$
Furthermore, we have $\bar{p}_0:=\int_{\mathbb{R}}p_0 \ud m$ with $p_0$ the weak limit (in measures, see~\eqref{est_L1}) of $p_\e$ which is given by
$$
p_0(\bd{x}, \bd{v},m,t) = \bar{p}_0 (\bd{x}, \bd{v},t) \delta \big(m=M(\bd{x},t) \big).
$$
\label{theo}
\end{theorem}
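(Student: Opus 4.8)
The plan is to pass to the limit $\e\to 0$ in the rescaled equation \eqref{eq:kineticscale}--\eqref{eq:kineticscaleQ}, using a combination of a priori estimates for compactness and the introduction of a zoomed variable $y=(m-M)/\e$ to capture the stiff dependence. First I would establish the basic bounds: integrating \eqref{eq:kineticscale} in $m$ (using the boundary condition $p_\e(m=0)=0$ and $f(0)=0$, so no flux leaks at the boundary) shows that $\bar p_\e:=\int_\R p_\e\,\ud m$ satisfies a conservative equation of the form ${\p_t}\bar p_\e + \bd v\cdot\nabla_{\bd x}\bar p_\e = \int_V[\cdots]\ud\bd v'$ with bounded tumbling frequency, whence by \eqref{as:barpinit} and \eqref{eq:lambda} a uniform $L^\infty([0,T]\times\R^d\times V)$ bound on $\bar p_\e$ via a Gronwall/maximum-principle argument; together with \eqref{as:pinitmoment} this also gives uniform $L^1$ and second-moment bounds on $p_\e$ itself, so that, up to extraction, $p_\e\rightharpoonup p_0$ in the sense of measures and $\bar p_\e\rightharpoonup\bar p_0$ weakly-$\star$.

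The next — and central — step is to identify the profile of $p_0$ in $m$. The key observation is that the transport term in $m$ is $\frac1\e\p_m(f(m-M)p_\e)$ with $f(y)=-yG(y)$, $G>0$, so that $m=M(\bd x,t)$ is a \emph{strongly attracting} equilibrium for the fast $m$-dynamics (the drift $f(m-M)$ points toward $m=M$). I would make this rigorous by testing \eqref{eq:kineticscale} against a function like $(m-M)^2$ or $|m-M|$: the fast term produces $-\frac1\e\int f(m-M)\,\p_m[(m-M)^2]\,p_\e = \frac2\e\int (m-M)^2 G(m-M)\,p_\e \ge \frac{2g_-}\e \int (m-M)^2 p_\e$, while all other terms (transport in $\bd x$, tumbling, and the contribution of $\p_t M+\bd v\cdot\nabla_{\bd x}M = D_tM$ coming from differentiating $(m-M)^2$) are $O(1)$ in suitable norms. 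This forces $\iiint (m-M)^2 p_\e = O(\e)\to 0$, so concentration holds: any weak-$\star$ limit $p_0$ is supported on $\{m=M(\bd x,t)\}$, i.e. $p_0(\bd x,\bd v,m,t)=\bar p_0(\bd x,\bd v,t)\,\delta(m-M(\bd x,t))$, and $\int_\R p_0\,\ud m=\bar p_0$ as claimed.

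It then remains to derive the equation satisfied by $\bar p_0$, and this is where the path-wise gradient $D_tM$ must appear — the subtle point flagged in the introduction. Concentration alone only tells us $\Lambda((m-M)/\e,\cdot)\to\Lambda(0,\cdot)$ in a naive reading, which would \emph{not} produce a $D_tM$ dependence; the resolution is to rescale the profile near $m=M$. Setting $y=(m-M(\bd x,t))/\e$ and letting $q_\e(\bd x,\bd v,y,t):=\e\,p_\e(\bd x,\bd v,M+\e y,t)$, the equation for $q_\e$ picks up, from the chain rule applied to $M(\bd x,t)$, a drift term in $y$ of the form $-\frac1\e(\p_tM+\bd v\cdot\nabla_{\bd x}M)\p_y q_\e = -\frac1\e D_tM\,\p_y q_\e$, which combines with the $\frac1\e\p_y(f(\e y)q_\e)\approx \frac1\e\p_y(-\e y\,G(0)\,q_\e) = -G(0)\p_y(y q_\e)$-type term; balancing these at leading order, the limiting profile $Q(y)$ in the $y$-variable solves a stationary transport–tumbling equation $\p_y\big([D_tM + G(0)\,y]\,Q\big) = \mathcal Q$-type operator acting on $Q$, whose normalized solution, after integration in $y$, yields exactly the collision kernel evaluated at the shifted argument $y^\star=-D_tM/G(0)$. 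Integrating the $q_\e$-equation in $y$ and passing to the limit then gives \eqref{eq:kinetic}--\eqref{eq:tumbling1} with $T(u,\bd v,\bd v')=\Lambda(-u/G(0),\bd v,\bd v')$. I expect the main obstacle to be precisely the rigorous justification of this boundary-layer / zoomed-variable limit: controlling $q_\e$ uniformly (in a weighted $L^1$ with the second-moment weight in $y$), showing the $y$-profile equilibrates, and exchanging the limit with the nonlinear — in $m$ via the argument $(m-M)/\e$ — tumbling kernel. The monotonicity $\p_y\Lambda<0$ in \eqref{eq:lambda} and the uniform bounds $\lambda_-\le\Lambda\le\lambda_+$, $g_-\le G\le g_+$ should be what make the profile problem well-posed and the passage to the limit stable; handling the $m$-boundary at $0$ is harmless because the concentration set $\{m=M\}$ stays at distance $\ge m_->0$ from it by \eqref{eq:M}.
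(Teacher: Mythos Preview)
Your proposal is correct and follows the same route as the paper: uniform $L^1$, $L^\infty$ and second-moment bounds, the blow-up variable $y=(m-M)/\e$ with $q_\e=\e p_\e$, identification of the limiting profile in $y$, and integration in $y$ to obtain the equation for $\bar p_0$. One small slip to fix: in the $q_\e$-equation the tumbling term is $O(1)$, not $O(1/\e)$, so the leading-order balance is simply
\[
\p_y\big((D_tM+G(0)\,y)\,q_0\big)=0,
\]
with no $\mathcal Q$ on the right-hand side; this is precisely what forces $q_0=\bar q_0\,\delta\big(y+D_tM/G(0)\big)$, and the tumbling kernel $\Lambda(y,\cdot)$ enters only at the next step, when you integrate the full $q_\e$-equation in $y$ and let $\e\to 0$ (as you correctly describe afterwards). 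Also, the monotonicity $\p_y\Lambda<0$ plays no role in the argument---only the two-sided bounds on $\Lambda$ and $G$ are used.
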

Before we prove this theorem in the next sections, we present a variant of this result.

\paragraph{Internal noise.}
Due to random receptor-methylation and demethylation events, some internal noise can be observed in {\em E. coli} chemotaxis and we can model it by adding a
diffusion term in $m$ \cite{E}. The model is as follows:
\beq \label{eq:kineticscalef}
{\p_t}p_\epsilon+\bd{v}\cdot\nabla_{\bd{x}} p_\epsilon+\f{1}{\epsilon}\p_m\Big(f\big(m-M\big)p_\epsilon\Big)=\epsilon \p^2_{mm} p_\epsilon  + {\mathcal Q_\e}[m,M](p_\e),
\eeq
with the no-flux boundary condition that now reads
\beq \label{eq:kineticscalefbc}
f(-M)  p_\e(\bd{x},\bd{v},m, t) - \e^2  \p_m p_\e(\bd{x},\bd{v},m, t) =0, \quad  \hbox{at } m =0.
\eeq
\begin{theorem}[Limit with noise] With the assumptions and notations of Theorem~\ref{theo},  the same conclusions hold for the solution $p_\e$ of~\eqref{eq:kineticscalef}, with the same expression for $p_0$ and
$$
T\big(u, \bd{v},\bd{v}' \big) =\sqrt{\f{G(0)}{2\pi}}\int_{\mathbb{R}}\Lambda\big(y,  \bd{v},\bd{v}' \big) e^{-\f{G(0)}{2}\big(y+\f{u}{G(0)}\big)^2}\ud y.
$$
\label{theoFP}
\end{theorem}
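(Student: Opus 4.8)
\noindent\emph{Proof proposal.} The plan is to follow the route of Theorem~\ref{theo} (Sections~\ref{sec:change} and \ref{sec:apb}), the only new ingredient being that the extra term $\e\,\p^2_{mm}p_\e$ turns the fast operator, in the zoom variable, from a pure transport into a Fokker--Planck operator whose invariant state is a Gaussian rather than a Dirac mass; this is precisely what replaces the pointwise evaluation of $\Lambda$ by the announced convolution. Concretely, I would set $y=\f{m-M(\bd{x},t)}{\e}$ and $q_\e(\bd{x},\bd{v},y,t)=\e\,p_\e\big(\bd{x},\bd{v},M(\bd{x},t)+\e y,t\big)$, so that $\int q_\e\,\ud y=\bar p_\e$. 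Using $\p_m=\f1\e\p_y$, the identity $f(m-M)=f(\e y)=-\e y\,G(\e y)$ and $\e\,\p^2_{mm}=\f1\e\p^2_{yy}$, equation~\eqref{eq:kineticscalef} becomes
\beq
\p_t q_\e+\bd{v}\cdot\nabla_{\bd{x}}q_\e+\f1\e\Big[-D_tM\,\p_y q_\e-\p_y\big(y\,G(\e y)\,q_\e\big)-\p^2_{yy}q_\e\Big]={\mathcal Q}^0[y](q_\e),
\eeq
where $D_tM=\p_tM+\bd{v}\cdot\nabla_{\bd{x}}M$, ${\mathcal Q}^0[y](q)=\int_V\big[\Lambda(y,\bd{v},\bd{v}')q(\bd{v}')-\Lambda(y,\bd{v}',\bd{v})q(\bd{v})\big]\ud\bd{v}'$, and the boundary $m=0$ sits at $y=-M/\e$. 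As $\e\to0$, $G(\e y)\to G(0)$ and the bracket converges to the Fokker--Planck operator ${\mathcal L}_u q:=-\p^2_{yy}q-\p_y\big((G(0)\,y+u)\,q\big)$, whose unique integrable stationary state, up to normalization, is
\beq
{\mathcal G}_u(y)=\sqrt{\f{G(0)}{2\pi}}\;\exp\!\Big(-\f{G(0)}{2}\big(y+\tfrac{u}{G(0)}\big)^2\Big),\qquad \int_\RR {\mathcal G}_u(y)\,\ud y=1.
\eeq

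For the a priori estimates, nonnegativity of $p_\e$ is preserved. Integrating~\eqref{eq:kineticscalef} over $m\in(0,\infty)$ and noticing that~\eqref{eq:kineticscalefbc} is exactly the no-flux condition for the combined drift-diffusion flux $\f1\e\big(f(m-M)p_\e-\e^2\p_m p_\e\big)$, the $m$-mass is conserved and $\bar p_\e$ solves $\p_t\bar p_\e+\bd{v}\cdot\nabla_{\bd{x}}\bar p_\e=\int_\RR{\mathcal Q}_\e[m,M](p_\e)\,\ud m$; integrating along characteristics and using~\eqref{eq:lambda} together with the bound~\eqref{as:barpinit} on the initial data yields the uniform bound $\|\bar p_\e(t)\|_{L^\infty}\le\|\bar p^{\rm ini}\|_{L^\infty}e^{\lambda_+|V|t}$, hence weak-$\star$ precompactness of $\bar p_\e$. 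Then, testing~\eqref{eq:kineticscalef} against $(m-M(\bd{x},t))^2$, using $f(z)z=-z^2G(z)\le-g_-z^2$, the $\bd{v}$-cancellation of ${\mathcal Q}_\e$ and Young's inequality on the term produced by $\p_t M$ and $\nabla_{\bd{x}}M$, one obtains, exactly as for Theorem~\ref{theo}, a differential inequality for $W_\e(t):=\iiint (m-M)^2 p_\e$; the only change is that $\e\,\p^2_{mm}p_\e$ contributes the harmless $O(\e)$ term $2\e\iiint p_\e$ and a nonpositive boundary term coming from~\eqref{eq:kineticscalefbc}. This gives $W_\e(t)\le W_\e(0)e^{-g_-t/\e}+C\e^2$ and $\int_0^T W_\e\,\ud t=O(\e)$; equivalently $\iiint y^2 q_\e=\f1{\e^2}W_\e$ is bounded on every $[\tau,T]$ with $\tau>0$, so $q_\e$ is tight in $y$ away from the initial layer.

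The passage to the limit then goes as follows. Multiplying the $q_\e$-equation by $\e$ and letting $\e\to0$ (the transport and tumbling terms are $O(\e)$, and $\p_y\big(y(G(\e y)-G(0))q_\e\big)\to0$ by $|G(\e y)-G(0)|\le\|G'\|_\infty\e|y|$ and the moment bound), every weak limit $q_0$ satisfies ${\mathcal L}_{D_tM}q_0=0$ in $y$; since the integrable kernel of ${\mathcal L}_{D_tM}$ is one-dimensional, $q_0(\bd{x},\bd{v},y,t)=\bar p_0(\bd{x},\bd{v},t)\,{\mathcal G}_{D_tM(\bd{x},\bd{v},t)}(y)$ with $\bar p_0=\int q_0\,\ud y=\mbox{weak-}\star\lim\bar p_\e$, which also gives $p_\e\to\bar p_0\,\delta(m=M)$ in measures. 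Rewriting the source term of the $\bar p_\e$-equation after the change of variable,
\beq
\int_\RR{\mathcal Q}_\e[m,M](p_\e)\,\ud m=\int_{-M/\e}^{\infty}\!\!\int_V\big[\Lambda(y,\bd{v},\bd{v}')q_\e(\bd{x},\bd{v}',y,t)-\Lambda(y,\bd{v}',\bd{v})q_\e(\bd{x},\bd{v},y,t)\big]\ud\bd{v}'\,\ud y,
\eeq
and passing to the limit with $q_\e\rightharpoonup q_0$, the bounds and continuity of $\Lambda$ from~\eqref{eq:lambda}, and the tightness in $y$ (which controls the recession $-M/\e\to-\infty$), the right-hand side tends to ${\mathcal T}[D_tM](\bar p_0)$ as in~\eqref{eq:tumbling1} with
$$
T(u,\bd{v},\bd{v}')=\int_\RR\Lambda(y,\bd{v},\bd{v}')\,{\mathcal G}_u(y)\,\ud y=\sqrt{\f{G(0)}{2\pi}}\int_\RR\Lambda(y,\bd{v},\bd{v}')\,e^{-\f{G(0)}{2}(y+u/G(0))^2}\,\ud y,
$$
the announced formula; uniqueness for the limiting linear kinetic equation (smooth $M$, bounded $T$) upgrades convergence to the full family $\e\to0$.

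The main obstacle I expect is the interplay of the two layers. On one hand, in the zoom variable the boundary at $m=0$ recedes to $y=-\infty$: one must check that it does not interfere with the Fokker--Planck relaxation and that the limiting $y$-problem is genuinely posed on all of $\RR$ --- here the no-flux condition~\eqref{eq:kineticscalefbc} is essential, as it simultaneously kills the boundary flux in the $m$-mass balance and produces only dissipative boundary contributions in the moment estimate. On the other hand, near $t=0$ the $y$-second moment of $q_\e$ is bounded only in $L^1_t$ with an $O(1/\e)$ constant, so tightness in $y$ --- needed to integrate the non-decaying $\Lambda$ against $q_\e$ --- holds only on $[\tau,T]$, and the contribution of $[0,\tau]$ to the limiting source term must be shown to vanish as $\tau\to0$.
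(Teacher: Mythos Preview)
Your proposal is correct and follows essentially the same route as the paper: the zoom variable $y=(m-M)/\e$, the rewriting \eqref{eq:kineticqfm}, identification of the Gaussian equilibrium \eqref{q0gaussian} from the limiting Fokker--Planck relation, and the a priori bounds of Section~\ref{sec:apb} (mass conservation, $L^\infty$ bound on $\bar q_\e$ via Gronwall, second-moment control for tightness in $y$). The paper simply says ``case (ii) is obtained by the same token'' and does not spell out the noise case separately.

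Two minor remarks on how your write-up compares. First, the paper performs the second-moment estimate directly on $q_\e$ in the $y$-variable, whereas you test $p_\e$ against $(m-M)^2$ and then rescale; these are of course the same computation. Second, you are more explicit than the paper about the initial layer: the paper's Lemma states a bound on $\iint y^2 q_\e(t)$ in terms of $\iint y^2 q^{\rm ini}$, but for generic data satisfying only \eqref{as:pinitmoment} this quantity is $O(\e^{-2})$, so strictly speaking tightness in $y$ is only uniform on $[\tau,T]$, exactly as you note. Your observation that the contribution of $[0,\tau]$ to the distributional limit vanishes as $\tau\to0$ (by the $L^\infty$ bound on $\bar q_\e$ and the boundedness of $\Lambda$) is the honest way to close this, and is a refinement rather than a departure from the paper's argument.
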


\paragraph{A priori bounds and principle of the proof.} Before we explain the derivation of the formula stated in these theorems, let us make some observations which explain the difficulty. Because we assume that $M(x,t)$ is given, we handle a linear equation for which existence and uniqueness of weak solutions is well established. The nonlinear case, when the chemoattractant concentration giving rise to $M$ is coupled to $p^\e$, can also be treated, see \cite{Liao_Jie}. In particular we will make use of the uniform estimates (see Section~\ref{sec:apb})
\beq \label{est_L1}
\dis \iiint_{\RR^d\times V\times \RR}  p_\e(\bd{x},\bd{v},m, t) \ud \bd{x} \ud \bd{v} \ud m = \dis \iiint_{\RR^d\times V\times \RR}  p^{\rm ini}(\bd{x},\bd{v},m) \ud \bd{x} \ud \bd{v} \ud m, \qquad \forall t \geq 0,
\eeq
\beq \label{est_L2}
\bar p_\e (\bd{x},\bd{v},t) \leq  \| \bar p^{\rm ini} (\bd{x},\bd{v}) \|_\infty e^{Ct}, \qquad \forall t \geq 0,
\eeq
where $C$ is a nonnegative constant.
From these bounds, we conclude that we can extract subsequences (but to simplify the notations we ignore this subsequence) which converge as mentioned in the theorems.

Passing to the limit in the equation on $p_\e$ (with or without noise) gives us
$$
\p_m\Big(f\big(m-M\big)p_0\Big)= 0.
$$
This tells us that $f\big(m-M\big)p_0 =0$ (it is constant and $p_0$ is integrable). Because, with assumption~\eqref{eq:F},  $f\big(m-M\big)$ vanishes only for $m-M=0$, we conclude that $p_0$ is a Dirac mass at $m=M$, hence the expression of $p_0$ in Theorems~\ref{theo} and \ref{theoFP}.

However this information is not enough to pass to limit in the equation on $\bar p_\e$ obtained integrating in $m$ equation~\eqref{eq:kineticscale} or \eqref{eq:kineticscalef}, that is
$$
{\p_t} \bar p_\epsilon+\bd{v}\cdot\nabla_{\bd{x}} \bar p_\epsilon = \int_{\RR^+}{\mathcal Q_\e}[m,M](p_\e) \ud m.
$$
Indeed, in the right hand side, the product $\Lambda \Big(\f{m-M}{\epsilon}, \bd{v},\bd{v}' \Big)p_\epsilon(\bd{x},\bd{v}',m,t) $ is, in the limit,  a discontinuity multiplied by a Dirac mass. For this reason, we have to rescale in $m$ in order to evaluate this limit, which we do in the next section.

\section{The change of variable}
\label{sec:change}

To get a more accurate view of the convergence of $p_\e$ to a Dirac mass in $m$, and following \cite{ErbanOthmer04}, we introduce a blow-up variable around $m=M$. We set
\beq
y=\f{m-M}{\epsilon},\qquad q_\epsilon(\bd{x},\bd{v},y,t)=\epsilon p_\epsilon(\bd{x},\bd{v},m,t)
\label{eq:yq}\eeq
so that
\beq
\bar q_\epsilon(\bd{x},\bd{v},t) :=  \int_{\mathbb{R}}q_\epsilon(\bd{x},\bd{v},y,t)\ud y =\int_{\mathbb{R}}p_\epsilon(\bd{x},\bd{v},m,t)\ud m= \bar p_\e (\bd{x},\bd{v},t).
\label{eq:barqe}\eeq
Because of these identities, our statements will equivalently be on $\bar q_\e$ and will go through the analysis of $q_\e$ rather than $p_\e$ itself.

Also notice that the bounds in \eqref{est_L1}, \eqref{est_L2} also hold true for $q_\e$ and $\bar q_\e$ and allow us to take weak limits.
\\

\noindent \textbf{(i) Without noise.}
The equation for $q_\epsilon(t,\bd{x},\bd{v},y)$ is written, using the definition in~\eqref{eq:DM},
$$ \bea
{\p_t}q_\epsilon+\bd{v}\cdot\nabla_{\bd{x}} q_\epsilon & - \dis\f{1}{\epsilon}D_t M\p_y q_\epsilon+\f{1}{\epsilon^2}\p_y\big(f(\epsilon y)q_\epsilon\big)
\\[15pt]
&= \dis \int_V \left[
\Lambda \Big(y, \bd{v},\bd{v}' \Big)q_\epsilon(\bd{x},\bd{v}',y,t) - \Lambda \Big(y, \bd{v}',\bd{v}\Big)q_\epsilon(\bd{x},\bd{v},y,t) \right] \ud\bd{v}' .
\eea
$$
From \eqref{eq:F}, we can write $f(\epsilon y)=\epsilon yG(\epsilon y)$ and the above equation becomes
\beq\label{eq:kineticq}
 \bea
{\p_t}q_\epsilon+\bd{v}\cdot\nabla_{\bd{x}} q_\epsilon & - \dis\f{1}{\epsilon}D_tM \p_y q_\epsilon-\f{1}{\epsilon}\p_y\big(yG(\epsilon y)q_\epsilon\big)
\\[15pt]
&= \dis \int_V \left[
\Lambda \Big(y, \bd{v},\bd{v}' \Big)q_\epsilon(\bd{x},\bd{v}',y,t) - \Lambda \Big(y, \bd{v}',\bd{v}\Big)q_\epsilon(\bd{x},\bd{v},y,t) \right] \ud\bd{v}' .
\eea
\eeq
Because $q_\e$ is a bounded measure on $\RR^d\times V \times \RR^+\times (0,T)$, for all $T>0$,
as $\epsilon\to 0$, $q_\e$ has a weak limit $q_0$ in the sense of measure
(again after extraction) and the above equation gives, in the distributional sense,
\beq\label{eq:kineticlim}
\p_y\big(yG(0)q_0+D_tM(S) q_0\big)=0.
\eeq
From this, we infer that
\beq\label{q0delta}
q_0(t,\bd{x},\bd{v},y)=\bar{q}_0(t,\bd{x},\bd{v})\delta\Big(y=-\f{D_tM(S)}{G(0)}\Big).
\eeq
This information is useful provided we can establish that
\beq\label{as:well}
\bar{q}_0(\bd{x},\bd{v},t)=\int_{\mathbb{R}}q_0(\bd{x},\bd{v},y,t)\ud y =  \mbox{weak-}\lim_{\e \to 0} \bar q_\e(\bd{x},\bd{v},t).
\eeq
This step is postponed to Section~\ref{sec:apb} and involves a control of the tail for large values of $m$.

We may also integrate equation~\eqref{eq:kineticq}  with respect to $y$ and find in the limit
\beq\label{eq:kineticqbar}
{\p_t}\bar{q}_0+\bd{v}\cdot\nabla_{\bd{x}}\bar{q}_0
= \dis \int_V \left[
\Lambda \Big(-\f{D'_tM}{G(0)}, \bd{v},\bd{v}' \Big)\bar{q}_0'\ud\bd{v}'-\Lambda \Big(-\f{D_tM}{G(0)}, \bd{v}',\bd{v} \Big) \bar{q}_0\right],
\eeq
where $D'_tM(S)$ is the total derivative, as in~\eqref{eq:DM}, but in the direction $\bd{v}'$ and where $\bar{q}_0'$ represents $\bar{q}_0(\bd{x},\bd{v}',t)$.
Finally, for any smooth  test function $\phi$, we have from the change of variable $m\mapsto y=(m-M)/\e$,
$$
\int_\RR p_\e(\bd{x},\bd{v},m,t)\phi(m)\ud m = \int_\RR q_\e(\bd{x},\bd{v},y,t) \phi(M+\e y)\ud y
\to \bar{q}_0(\bd{x},\bd{v},t)\phi(M),
$$
where we use \eqref{q0delta}. This gives the limiting expression of $p_0$ in Theorem \ref{theo}.

These are the results stated in Theorem~\ref{theo}, if we can establish the relation $\bar q_0=\bar p_0$ as stated in~\eqref{as:well}, which we do later.
\\

\noindent \textbf{(ii) With internal noise.}
Similarly, after introducing the new variables as in \eqref{eq:yq},
the equation~\eqref{eq:kineticscalef} for $q_\epsilon(t,\bd{x},\bd{v},y)$ writes
$$ \bea
{\p_t}q_\epsilon+\bd{v}\cdot\nabla_{\bd{x}} q_\epsilon& -\dis \f{1}{\epsilon}D_tM \p_y q_\epsilon+\f{1}{\epsilon^2}\p_y\big(f(\epsilon y)q_\epsilon\big)
\\[15pt]
& =\f{1}{\epsilon}\p_{yy}^2 q_\epsilon + \dis \int_V \left[
\Lambda \Big(y, \bd{v},\bd{v}' \Big)q_\epsilon(\bd{x},\bd{v}',y,t) - \Lambda \Big(y, \bd{v}',\bd{v}\Big)q_\epsilon(\bd{x},\bd{v},y,t) \right] \ud\bd{v}' .
\eea
$$
From \eqref{eq:F}, this equation becomes
\beq\label{eq:kineticqfm}
\bea
{\p_t}q_\epsilon+\bd{v}\cdot\nabla_{\bd{x}} q_\epsilon&- \dis\f{1}{\epsilon}D_tM\p_y q_\epsilon-\f{1}{\epsilon}\p_y\big(yG(\epsilon y)q_\epsilon\big)
\\[15pt]
& =\f{1}{\epsilon}\p_{yy}^2 q_\epsilon+ \dis \int_V \left[
\Lambda \Big(y, \bd{v},\bd{v}' \Big)q_\epsilon(\bd{x},\bd{v}',y,t) - \Lambda \Big(y, \bd{v}',\bd{v}\Big)q_\epsilon(\bd{x},\bd{v},y,t) \right] \ud\bd{v}' .
\eea
\eeq
In the limit $\epsilon\to 0$, the above equation converges to, in the sense of distributions,
$$
\p_y\big(yG(0)q_0+D_tM q_0\big)=-\p_{yy}^2q_0 ,
$$
which shows that
\beq\label{q0gaussian}
q_0(\bd{x},\bd{v},y,t)=\bar{q}_0 (\bd{x},\bd{v},t) \; \sqrt{\f{G(0)}{2\pi}} e^{ -\f{G(0)}{2}\big(y+\f{D_tM}{G(0)}\big)^2},
\eeq
a useful information, still assuming we have proved the relation~\eqref{as:well} for $\bar q_0$.

We conclude as before. After integration of \eqref{eq:kineticqfm} with respect to $y$, passing to the limit $\e \to 0$, we find
\beq\label{eq:kineticqbarfm} \begin{cases}
{\p_t}\bar{q}_0+\bd{v}\cdot\nabla_{\bd{x}}\bar{q}_0
= \dis \int_V \left[T\Big(D'_tM,\bd{v}',\bd{v} \Big)\bar{q}_0'\ - T\Big(D_tM, \bd{v}, \bd{v}'\Big)\bar{q}_0 \right] ud\bd{v}',
\\[15pt]
T\Big(D_tM, \bd{v}, \bd{v}'\Big)=\sqrt{\f{G(0)}{2\pi}} \dis \int_{\mathbb{R}}\Lambda(y , \bd{v}, \bd{v}')e^{-\f{G(0)}{2}\big(y+\f{D_tM}{G(0)}\big)^2}
\ud y.
\end{cases}
\eeq
The Theorem~\ref{theoFP} is also proved.\qed

\section{A priori bounds}
\label{sec:apb}

We now establish  the various estimates which justify that we can pass to the limit as indicated in Section~\ref{sec:change} and thus we prove the
\begin{lemma} We make the  assumptions of Theorem \ref{theo}, then the condition~\eqref{as:well} holds and for some constant which depends on $\iint y^2q^{\rm ini}\ud y\ud x\ud v $ and $\| M \|_{W^{1,\infty}(\RR^d\times \RR^+)}$, we have
$$
\iint y^2q_\epsilon(t)\ud y\ud x\ud v \leq C( q^{\rm ini},M).
$$
Consequently,  $q_\epsilon$ converges weakly in the sense of measure towards $q_0$ and
\\
(i) for  $q_\epsilon$ a solution to \eqref{eq:kineticq}, $q_0$ is given by \eqref{q0delta} with $\bar{q}_0$ weak solution of \eqref{eq:kineticqbar},
\\
(ii)  for $q_\epsilon$ a solution to \eqref{eq:kineticqfm}, then, $q_0$ is given by \eqref{q0gaussian} with $\bar{q}_0$ weak solution of \eqref{eq:kineticqbarfm}.
\end{lemma}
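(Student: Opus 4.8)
The plan is to derive the $y^2$-moment estimate by testing the equation for $q_\e$ against $y^2$ and controlling each term, then to use this estimate as tightness to upgrade the weak-$\star$ convergence of $\bar q_\e$ into the identity $\bar q_0 = \int q_0\,\ud y$ required in \eqref{as:well}. First, I would multiply \eqref{eq:kineticq} (resp.\ \eqref{eq:kineticqfm}) by $y^2$ and integrate in $\bd{x}$, $\bd{v}$, $y$. The transport terms $\p_t q_\e$ and $\bd{v}\cdot\nabla_{\bd{x}} q_\e$ produce $\f{d}{dt}\iint y^2 q_\e\,\ud y\ud x\ud v$ and a term that vanishes after integration in $\bd{x}$. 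The tumbling term vanishes identically after integration in $\bd{v}$, by the symmetry of the gain/loss structure in \eqref{eq:Q}: $\int_V \mathcal{Q}_\e[m,M](q_\e)\ud\bd{v} = 0$ pointwise in $y$, so multiplying by $y^2$ and integrating gives zero. The only terms requiring care are the two singular drift contributions.

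The key computation is the drift terms $-\f1\e D_tM\,\p_y q_\e - \f1\e \p_y(yG(\e y) q_\e)$. Testing against $y^2$ and integrating by parts in $y$ (the boundary terms at $y=\pm\infty$ vanish using the moment bound, and at the lower limit the no-flux/zero boundary condition is used in the noisy case; in the noiseless case the substitution $y=(m-M)/\e$ with $p_\e(m=0)=0$ handles the endpoint), the $D_tM$ term yields $\f1\e \int 2y\, D_tM\, q_\e\,\ud y\,\cdots$, and the $yG(\e y)$ term yields $-\f1\e \int 2y\cdot yG(\e y) q_\e\,\ud y\,\cdots = -\f2\e \int y^2 G(\e y) q_\e$. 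Using $G \geq g_- > 0$ from \eqref{eq:F}, the latter is bounded above by $-\f{2g_-}{\e}\iint y^2 q_\e$, a strong damping term. The former is controlled by Young's inequality: $\f2\e |y|\,|D_tM|\,q_\e \leq \f{g_-}{\e} y^2 q_\e + \f{1}{g_-\e}|D_tM|^2 q_\e$. Since $M\in C^1_{\rm b}$ and $\bd{v}\in V$ bounded, $|D_tM| \leq \|M\|_{W^{1,\infty}}(1+|V|)$ is bounded, and $\iint q_\e\,\ud y\ud x\ud v$ is conserved by \eqref{est_L1}. Thus, writing $Y_\e(t) := \iint y^2 q_\e(t)\,\ud y\ud x\ud v$, we obtain a differential inequality of the form $\f{d}{dt} Y_\e \leq -\f{g_-}{\e} Y_\e + \f{C}{\e}$, whence $Y_\e(t) \leq Y_\e(0)e^{-g_- t/\e} + C/g_- \leq \max(Y^{\rm ini}, C/g_-)$, uniformly in $\e$ and $t$. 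In the noisy case the extra term $\f1\e\p_{yy}^2 q_\e$ tested against $y^2$ gives $\f2\e\iint q_\e = \f2\e\|q^{\rm ini}\|_{L^1}$, which is $O(1/\e)$ but with the favorable sign absorbed by the same damping term, so the bound persists (one checks the boundary term from the two integrations by parts against the flux condition \eqref{eq:kineticscalefbc}).

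With the uniform $y^2$-moment bound in hand, the remaining step is to prove \eqref{as:well}, namely that no mass escapes to $|y|=\infty$ in the limit, so that $\bar q_0(\bd{x},\bd{v},t) = \int_\RR q_0(\bd{x},\bd{v},y,t)\,\ud y$ coincides with $\mathrm{weak\text{-}}\lim \bar q_\e$. This follows from tightness: for any $R>0$, $\iint_{|y|>R} q_\e \leq R^{-2} Y_\e \leq C R^{-2}$ uniformly in $\e$, so the family $\{q_\e(t)\}$ is tight in $y$; combined with the weak-$\star$ convergence of $\bar q_\e$ in $L^\infty([0,T]\times\RR^d\times V)$ from \eqref{est_L2}, passing to the limit in $\bar q_\e = \int q_\e\,\ud y = \int_{|y|\leq R} q_\e\,\ud y + O(R^{-2})$ and letting $R\to\infty$ identifies the limit. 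Once \eqref{as:well} holds, conclusions (i) and (ii) follow directly: the distributional limits \eqref{eq:kineticlim} (resp.\ its noisy analogue) force $q_0$ to be the Dirac mass \eqref{q0delta} (resp.\ the Gaussian \eqref{q0gaussian}), and integrating the equation for $q_\e$ in $y$ and passing to the limit — now legitimate because the $y$-tails are uniformly negligible and $q_0$ has the explicit form above — yields that $\bar q_0$ solves \eqref{eq:kineticqbar} (resp.\ \eqref{eq:kineticqbarfm}).

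I expect the main obstacle to be the boundary handling at $y = -M/\e$ (i.e.\ $m=0$) in the moment estimate, particularly in the noisy case where the flux boundary condition \eqref{eq:kineticscalefbc} must be reconciled with the two integrations by parts needed for the $\p_{yy}^2$ term against the weight $y^2$; one must verify that the resulting boundary contributions are either zero or have the right sign, which requires care since $y=-M/\e \to -\infty$ and the weight $y^2$ is large there. A secondary subtlety is justifying the integrations by parts rigorously given that $q_\e$ is only an $L^1$ (measure) solution, which is handled by a standard approximation/truncation argument in $y$ combined with the already-established $L^\infty$ bound on $\bar q_\e$.
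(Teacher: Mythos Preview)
Your proposal is correct and follows essentially the same route as the paper: multiply \eqref{eq:kineticq} by $y^2$, integrate over $\bd{x},\bd{v},y$, use the gain--loss cancellation in $\bd{v}$ for the tumbling term, integrate the drift terms by parts in $y$, apply Young/Cauchy--Schwarz together with $G\geq g_-$ to obtain $\f{d}{dt}Y_\e + \f{g_-}{\e}Y_\e \leq \f{C}{\e}$, and conclude by Gronwall; the tightness in $y$ then yields \eqref{as:well} and permits the passage to the limit in the $y$-integrated equation. The paper in fact only writes out case (i) and dismisses (ii) with ``by the same token'', so your explicit remark that the diffusion contributes an additional $\f{2}{\e}\|q^{\rm ini}\|_{L^1}$ to the constant on the right-hand side (and your flagging of the boundary terms at $y=-M/\e$ in the noisy case) goes slightly beyond what the paper records.
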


\begin{proof}
We only consider the case (i) without noise, the case (ii) is obtained by the same token. We first prove some estimates which imply weak convergence. Then, we pass to the limit in the equation satisfied by $\bar{q}_\epsilon$.
\\

\noindent {\bf $L^1$ bound.} For completness, we recall that equation \eqref{eq:kineticq} is positivity preserving and conservative. It follows the uniform, in $\epsilon$, bound for $q_\epsilon$ in $L^1$, see~\eqref{est_L1}.
\\

\noindent{\bf $L^\infty$ bound on  $\bar{q}_\epsilon$.}
We use the notation~\eqref{eq:barqe} for $\bar{q}_\epsilon$. Arguing in the spirit of \cite{HKS,bcgp,krm}), we first prove the uniform $L^\infty$ bound on $\bar{q}_\epsilon$.

Integrating~\eqref{eq:kineticq} with respect to $y$, from the bound~\eqref{eq:lambda}
and the nonnegativity of $q_\epsilon$, we get
$$
\p_t\bar{q}_\epsilon+\bd{v}\cdot\nabla_{\bd{x}}\bar{q}_\epsilon \leq \lb_+ \int\bar{q}_\epsilon\ud\bd{v}'.
$$
Then, using the method of characteristics, we have
$\p_t\bar{q}_\epsilon(t,\bd{x}+\bd{v}t)\leq \lb_+ \int\bar{q}_\epsilon (\bd{x} + \bd{v}t, \bd{v}',t) \ud\bd{v}'$,
which implies after integration
$$
\bar{q}_\epsilon(t,\bd{x},\bd{v})-\bar{q}^{\rm ini}(\bd{x}-\bd{v}t)\leq \lb_+  \int_0^t\int\bar{q}_\epsilon (\bd{x} - \bd{v}s, \bd{v}',t-s) \ud\bd{v}' \ud s.
$$
Taking the supremum in $\bd{x}$, $\bd{v}$, we find
$$
\| \bar q_\e(t) \|_\infty \leq \| \bar q^{\rm ini} \|_\infty +  \lb_+ \, |V| \, \int_0^t \| \bar q_\e(s) \|_\infty ds,
$$
and using Gronwall's inequality, we find the estimate in~\eqref{est_L2}.
\\

\noindent{\bf Control on the tail in $m$.} In order to prove the condition~\eqref{as:well}, we need to ensure that there is no mass loss at
infinity in $m$. To do so,  we multiply both sides of \eqref{eq:kineticq} by $y^2$ and integrate by parts
with respect to $x$, $v$, and $y$. This yields
$$
\frac{d}{dt} \iint y^2q_\epsilon\ud y\ud x\ud v + \frac{2}{\epsilon} \iint y^2G(\epsilon y)q_\epsilon\ud y\ud x\ud v + \frac{2}{\epsilon} \iint yD_tM q_\epsilon \ud y\ud x\ud v=0 .
$$
Using the Cauchy-Schwarz inequality, we deduce
$$
\begin{aligned}
\frac{d}{dt} \iint y^2q_\epsilon\ud y\ud x\ud v + \frac{2}{\epsilon} \iint y^2G(\epsilon y)q_\epsilon\ud y\ud x\ud v \leq
& \f{1}{\epsilon} \iint y^2G(\epsilon y)q_\epsilon \ud y \ud x\ud v \\
& + \f{1}{\epsilon} \iint (D_tM)^2 \frac{q_\epsilon}{G(\epsilon y)}\ud y \ud x\ud v.
\end{aligned}
$$
By assumption~\eqref{eq:M},  $D_tM$ is bounded in $L^{\infty}([0,T]\times \RR^d\times V)$.
From assumption~\eqref{eq:F} and the mass conservation, the last integral of the right hand side is uniformly bounded by a constant denoted by $C>0$.
Then from assumption \eqref{eq:F}, we have,
$$
\frac{d}{dt} \iint y^2q_\epsilon\ud y\ud x\ud v + \frac{g_-}{\epsilon} \iint y^2 q_\epsilon\ud y\ud x\ud v \leq \frac{C}{\epsilon}.
$$
From the Gronwall Lemma, we deduce the bound for all $t>0$,
$$
\iint y^2q_\epsilon(t)\ud y\ud x\ud v \leq e^{-t g_-/\epsilon}\iint y^2q^{\rm ini}\ud y\ud x\ud v +\frac{C}{g_-},
$$
which implies a uniform bound on $\iint y^2q_\epsilon(t)\ud y\ud x\ud v$.
\\

\noindent{\bf Passing to the limit.}
From the bound above, we deduce that, we can extract a subsequence which converges weakly
in measure $q_\epsilon\rightharpoonup q_0$ and such that
$\bar{q}_\epsilon\rightharpoonup \bar{q}_0$ in $L^\infty$-weak$\star$.
Then we can pass to the limit in the sense of distribution in equation \eqref{eq:kineticq}
and deduce that the limit $q_0$ satisfies equation \eqref{eq:kineticlim} in the sense of distribution.
In fact, we notice that from the Lipschitz character of $G$, we have
$$
\iint y(G(\epsilon y)-G(0))\bar{q}_\epsilon \ud x \ud y \leq C \epsilon \iint y^2 \bar{q}_\epsilon \ud x \ud y \to 0, \qquad \mbox{ as } \epsilon \to 0,
$$
thanks to the estimate on the tail above.
Finally, \eqref{eq:kineticlim} implies that $yG(0) q_0 + D_t M q_0$ is constant a.e.,
and this constant should be $0$ since from the estimate on the tail above, we have that
$yq_0 \in L^1$. We conclude that $q_0$ vanishes except when $y G(0)+D_tM(S)=0$.
By conservation of the mass, we deduce the expression~\eqref{q0delta} for $p_0$.
\end{proof}

\section{Comments on physical background}
\label{sec:physics}

 The form of the equation  \eqref{eq:kineticscale} corresponds, for {\em E. coli} chemotaxis, to the formalism in the physical literature. We have simplified the notations for mathematical clarity and we explain now how to relate our notations to known biophysical quantities. Here we have used the same biological parameters
  as in \cite{JOT, SWOT}.

\begin{itemize}
\item The methylation level $M(x,t)$ at equilibrium is related to the extra-cellular attractant profile $S$, by a logarithmic dependency
$$M=M(S)=m_0+\f{f_0(S)}{\alpha},\qquad\mbox{with}\quad
f_0(S)=\ln\biggl(\f{1+S/K_I}{1+S/K_A}\biggr).$$
The constants $m_0$, $K_I$, $K_A$
represent the basic methylation
level, and the dissociation constants for inactive, respectively active, receptors.
Numerical values are given by $\alpha=1.7$, $m_0=1$, $K_I=18.2\mu M$, $K_A=3mM$.

\item The receptor activity $a(m,S)$ depends on the intracellular
methylation level $m$ and the extracellular chemoattractant
concentration $S$ such that
\beq\label{eq:am}
a=\bigl(1+\exp(NE)\bigr)^{-1},\qquad\mbox{with }
E=-\alpha(m-m_0)+f_0(S)= - \alpha(m-M(S)) .
\eeq
The coefficient $N=6$ represents the number of tightly coupled receptors.

\item The intracellular dynamics and tumbling frequency are given by
$$
f\big(m-M(S)\big) = F(a)=k_R(1-a/a_0),\qquad \lambda \big(m-M(S) \big)= Z(a)=z_0+\tau^{-1}\big(\f{a}{a_0}\big)^H,
$$
where $a(m,S)$ is the receptor activity defined in \eqref{eq:am}.
The parameter $k_R$ is the
methylation rate, $a_0$ is the receptor preferred activity which is such that
$f(a_0)=0$ and $f'(a_0)<0$. For the tumbling frequency,
$z_0$, $H$, $\tau$ represent the rotational diffusion, the Hill coefficient
of flagellar motors response curve and the average run time
respectively. All these parameters can be measured biologically, their values are given by
$k_R=0.01s^{-1}\sim 0.0005s^{-1} $, $a_0=0.5$, $z_0=0.14s^{-1}$, $\tau=0.8s$, $H=10$.

\item Two kinds of noise can be observed in the signaling pathway for {\it E. coli}, one is from the external fluctuation of
the ligand concentration and the other is the internal noise from the random receptor-methylation and demethylation event \cite{E}.
For small complexes, the effect on the activity from the external noise is negligible compared to the internal noise.
\end{itemize}
We refer the readers to \cite{SWOT,E,JOT, STY} and the
references therein for the detailed physical meanings of these
parameters.

In {\em E. coli} chemotaxis, the adaptation time is faster than the system time, i.e. the rescaled constant $k_R$ is large.
Let $k_R=\f{1}{\epsilon}$, where $\epsilon$ is the ratio between the system time and the adaptation time. For example, when the system time is 1000s and
the adaptation time is 100s, $\epsilon=0.1$.

Moreover, the
Hill coefficient $H$ is large which indicates that $\lambda(m-M(S))$ varies fast with respect to $m-M(S)$.
Therefore, the scaling introduced in \eqref{eq:kineticscale} is satisfied by {\it E. coli} chemotaxis.
We can use
$$
 \quad f(r)=1-\f{1}{a_0}\big(1+\exp(-N\alpha r)\big)^{-1}.
$$
Therefore, the function $G(\cdot)$ used in~\eqref{eq:F} is given by
$$
G(r)=-\f{f(r)}{r}=-\f{1}{r}\Big(1-\f{1}{a_0}\big(1+\exp(-N\alpha r)\big)^{-1}\Big),\qquad
\mbox{with }G(0)=-f'(0)=\f{N\alpha}{4a_0}.
$$
Besides, from Theorem \ref{theor1} for the case without noise, using $y=\f{r}{\epsilon}$ yields
$$\begin{array}{rl}
T(D_tM,\bd{v},\bd{v'}) & =\Lambda(y,\bd{v},\bd{v}')=z_0+\tau^{-1} \dis \f{\big(1+\exp(-N\alpha \epsilon y)\big)^{-H}}{a_0^H}
\\[5pt]
&=
z_0+\tau^{-1} \dis \f{\Big(1+\exp\big(N\alpha \epsilon D_tM(S)/G(0)\big)\Big)^{-H}}{a_0^H}.
\end{array}
$$
And from Theorem \ref{theor1}, when the noise in the methylation level is considered,
$$
T\big(D_tM, \bd{v},\bd{v}' \big) =
\sqrt{\f{G(0)}{2\pi}}\int_{\mathbb{R}}\Lambda\big(y,  \bd{v},\bd{v}' \big) e^{-\f{G(0)}{2}\big(y+\f{D_tM}{G(0)}\big)^2}\ud y.
$$

Since the run duration last longer when bacteria encounter an increasing gradient of chemotactic molecules, this lead
to higher bacteria density at the place where the ligand concentration $S$ is higher. This phenomena is well explained by
the classical Keller-Segel model which can be considered as the parabolic limit of the kinetic-transport models.
However, recent experimental observation shows that higher ligand concentration leading to higher bacteria density is only valid
 in a spatial-temporal slow-varying environment. When the ligand concentration varies fast,
 there exists a phase shift between the mass center of ligand concentration and of the
 bacterias \cite{ZSOT}. This is due to the memory effect in the slow methylation adaptation rate.
In the limiting kinetic model, $D_tM(S)$ takes into account the memory effect along the trajectory of the moving cell \cite{krm}.
Then, two interesting questions come: are these two kinds of memory effect the same?
Can we see the phase shift between the mass center of ligand concentration
and of the bacteria in the limiting kinetic model?

\section{Numerical illustrations}
\label{sec:num}

We performed numerical simulations using the method SPECS  \cite{JOT}. It  is a cell based model that takes into account the evolution of each cell intracellular methylation level, which
determines the tumbling frequency of each bacteria. As explained in \cite{STY,SWOT}, SPECS
and the kinetic model that incorporates intracellular chemo-sensory system \eqref{eq:kineticm} show a quantitative match.
As in \cite{STY,SWOT}, we choose the two velocity kinetic model and
a periodic 1D traveling wave concentration
$$
S(x,t) = S_0 +S_A
\sin[\f{2\pi}{\ell}(x-ut)] ,
$$
which is spatial-temporal varying
and where the wavelength $\ell$ is equal to the length of the domain.
We compare the numerical results of SPECS and the limiting kinetic model in Figure~\ref{fig:numerical}. Upwind scheme
is used for the transport terms and periodic boundary conditions are considered.
The density is scaled at the order of $10^{-3}$, it is the ratio between the actual cell number and the total number. In the
Figure~\ref{fig:numerical}, the $x$ axis represents the remainder of $x-ut$ mod $\ell$, i.e. we keep tracking the wave front of the periodic traveling wave.

 Two different wave velocities $u = 0.4\mu m/s$ and $u=8\mu m/s$ are considered.
We compare the density profiles $\rho=\int_V p \ud\bd{v}$ and the cell flux $J=\int_V\bd{v}p\ud\bd{v}$.
 When the concentrated wave moves
slowly, the limiting kinetic model gives good consistency, however
in the fast-varying environment, the density  and cell flux profiles are different for
SPECS and the limiting kinetic model. We refer
the readers to \cite{JOT,SWOT} for more detailed
discussions and physical explanations.

This numerical experiment shows that the memory effect using the model based on $D_tM(S)$ is different from the memory effect for the complete model~\eqref{eq:kineticm} when  fast external chemoattractant waves are considered. This phenomena can be explained by the slow adaptation rate in the methylation level and the memory along the trajectory compared to the phase shift. In this fast wave regime, our mathematical results do not apply because the scaling assumptions are not satisfied.

\begin{figure}
\centering
\includegraphics[width=3.25in]{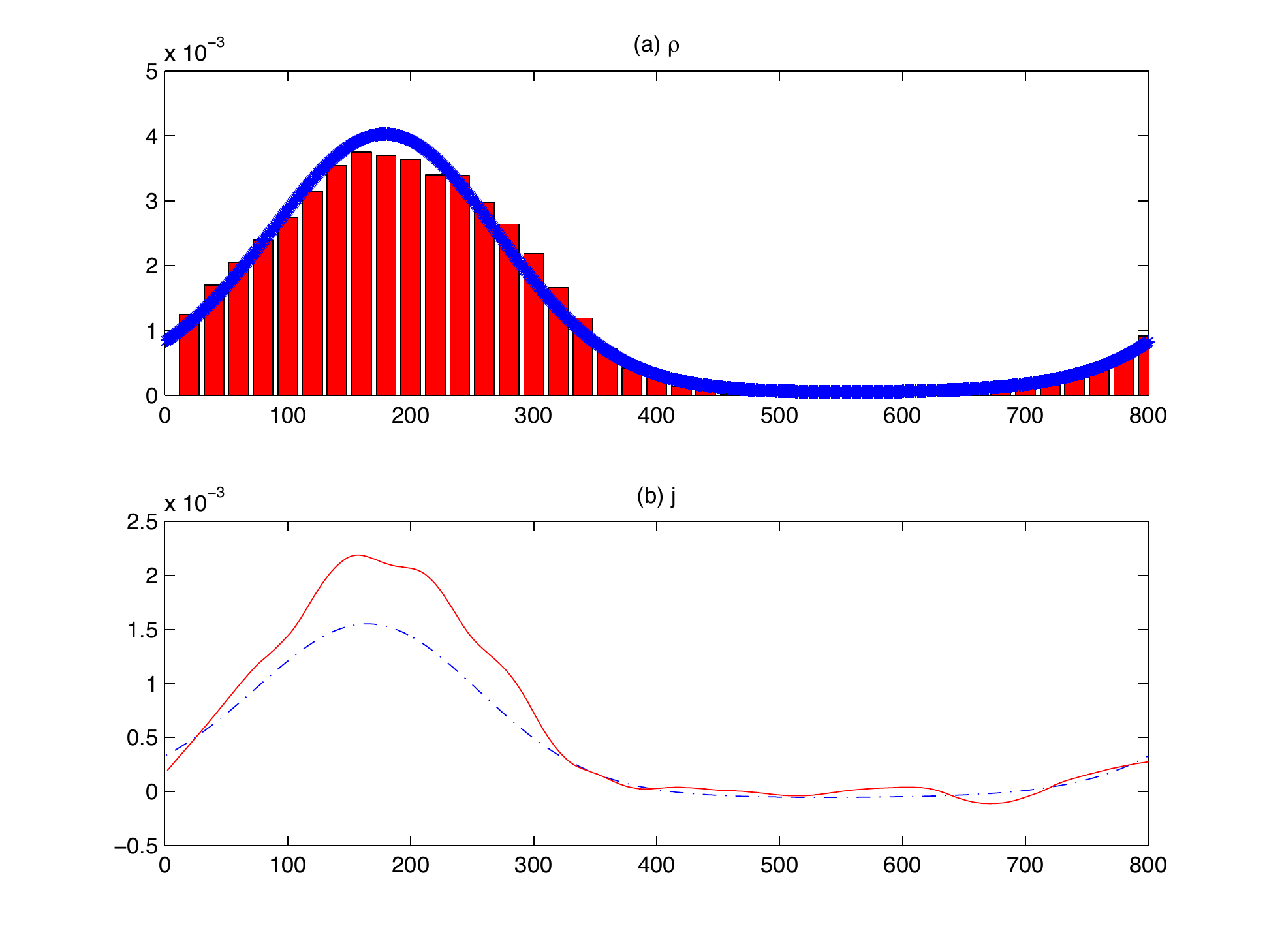}
\includegraphics[width=3.25in]{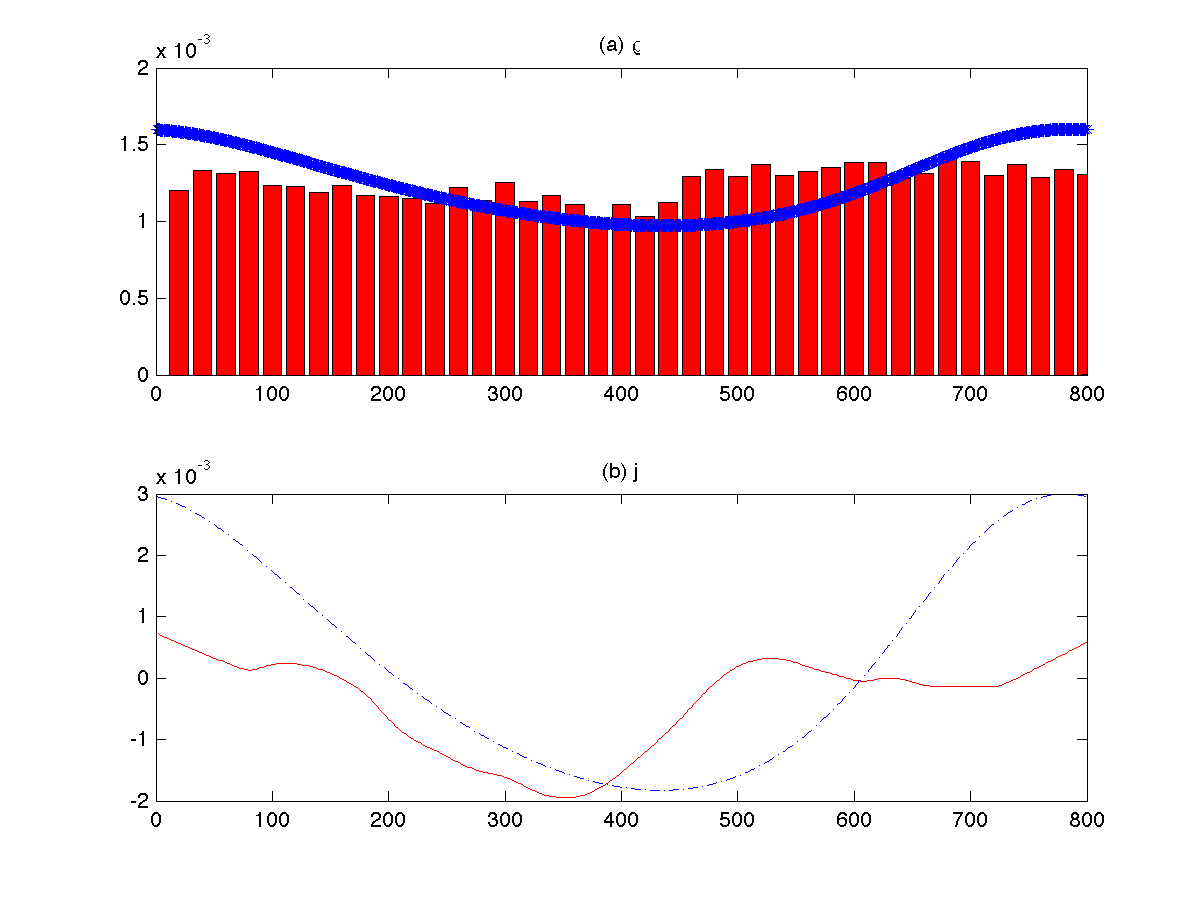}
\vspace{-6mm}
\caption{Numerical comparison between the limiting kinetic model
and SPECS. The steady state profiles of density $\rho$ (top) and current $J$ (bottom) are
presented. Left: $u=0.4\mu m/s$; Right: $u=8\mu m/s$. In the
subfigures, histograms and solid lines are from SPECS, while
the dash star and dash dotted lines are calculated using the limiting kinetic equation.
Parameters used here are $[L]_0
= 500\mu M$, $[L]_A = 100\mu M$, $\ell = 800 \mu m$.  We have used $20,000$
cells for simulation with SPECS. }\label{fig:numerical}
\end{figure}


\bibliography{swarm}

\end{document}